\newtheorem{theorem}{Theorem}[section]
\newtheorem{conjecture}[theorem]{Conjecture}
\newtheorem{lemma}[theorem]{Lemma}
\newtheorem{proposition}[theorem]{Proposition}
\newtheorem{remark}[theorem]{Remark}
\newcommand{\bb}[1]{\mathbb{#1}}
\newcommand{\mc}[1]{\mathcal{#1}}
\newcommand{\mf}[1]{\mathfrak{#1}}
\begin{document}

\title{On Quantization of a Nilpotent Orbit Closure in $G_2$}
\author{Kayue Daniel Wong}
\address{Department of Mathematics, Hong Kong University of Science and
Technology, Clear Water Bay, Kowloon, Hong Kong}
\email{makywong@ust.hk}

\begin{abstract}
Let $G$ be the complex exceptional Lie group of type $G_2$. Among the five nilpotent orbits in its Lie algebra $\mf{g}$, only the 8-dimensional orbit $\mc{O}_8$ has non-normal orbit closure $\overline{\mc{O}_8}$. In this manuscript, we will give a quantization model of $\overline{\mc{O}_8}$, verifying a conjecture of Vogan in 1984.
\end{abstract}
\maketitle
\section{Introduction}
Let $G$ be a complex simple Lie group. The $G$-conjugates of a nilpotent element $X \in \mf{g}$ form a \textbf{nilpotent orbit} $\mc{O} \subset \mf{g}$. Following the ideas in \cite{V1} or \cite{V3}, one would like to attach unitary representations to all such orbits along with their finite $G$-equivariant covers. More precisely, let $V$ be a finite $G$-equivariant cover of an affine Poisson $G$-variety containing a nilpotent orbit $\mc{O}$ as an open set, with its ring of regular functions $R(V)$, then one would like to find a (hopefully unitarizable) $(\mf{g}_{\bb{C}}, K_{\bb{C}})$-module $X_{V}$ such that we have the $G$-module isomorphism
$$X_{V}|_{K_{\bb{C}}} \cong R(V).$$
(note that $K \leq G$ is the maximal compact subgroup of $G$, hence its complexification $K_{\bb{C}}$ is isomorphic to $G$). In the following work, we will call $X_{V}$ as a \textbf{quantization} of $V$.\\
\indent As hinted in \cite{V1}, one needs to take special attention when the orbit closure $\overline{\mc{O}}$ is not normal. One reason is due to the algebro-geometric fact that $R(\overline{\mc{O}}) \cong R(\mc{O})$ if and only if  $\overline{\mc{O}}$ is normal. Following the spirit of the orbit method, one needs to give a quantization model for $V = \mc{O}$ and $V = \overline{\mc{O}}$ separately when $\overline{\mc{O}}$ is not normal.\\
\indent Here is a summary on the current progress of the above quantization scheme. In \cite{B 2008}, Barbasch constructs such models for a large class of classical nilpotent orbits. Using a completely different method in \cite{Br 2003}, Ranee Brylinski constructs a Dixmier algebra for all classical nilpotent orbit closures. The reconciliation between the two models is the main theme of the Ph.D. thesis of the author \cite{W2}.\\
\indent Contrary to the classical setting, very little is known about the scheme for exceptional groups. We now focus on the case for $G = G_2$. Write $\{\alpha, \beta \}$ be the simple roots of $\mf{g}$, with $\alpha$ being the short root. The fundamental weights of $\mf{g}$ are therefore given by
$$\{ \omega_1, \omega_2 \} = \{2\alpha + \beta, 3\alpha + 2\beta\}.$$
By the Bala-Carter classification, we have five nilpotent orbits $\mc{O}_0$, $\mc{O}_6$, $\mc{O}_8$, $\mc{O}_{10}$ and $\mc{O}_{12}$ in $\mf{g}$. Following the study of completely prime primitive ideals of Joseph in \cite{J1}, \cite{J2}, Vogan in \cite{V1} conjectured a quantization model for $\mc{O}_8$ and $\overline{\mc{O}_8}$ for $G_2$:
\begin{conjecture}[\cite{V1}, Conjecture 5.6]
Let $\lambda \in \mf{h}^*$ and $J(\lambda)$ be the maximal primitive ideal in $U(\mf{g})$ with infinitesimal character $\lambda$. Then the $(\mf{g}_{\bb{C}}, K_{\bb{C}}) \cong (\mf{g} \times \mf{g}, G)$-modules
$$U(\mf{g})/J(\frac{1}{2}(\omega_1 + \omega_2)),\ \ \ U(\mf{g})/J(\frac{1}{2}(5\omega_1 - \omega_2))$$
are quantizations of $\mc{O}_8$ and $\overline{\mc{O}_8}$ respectively. In particular,
$$U(\mf{g})/J(\frac{1}{2}(\omega_1 + \omega_2))|_{K_{\bb{C}}} \cong R(\mc{O}_8),\ \ \ U(\mf{g})/J(\frac{1}{2}(5\omega_1 - \omega_2))|_{K_{\bb{C}}} \cong R(\overline{\mc{O}_8}).$$
As a consequence, $\mc{O}_8$ has non-normal closure.
\end{conjecture}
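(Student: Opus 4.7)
The plan is to identify both sides of each claimed isomorphism as $G$-modules and to compare them, extracting the non-normality of $\overline{\mc{O}_8}$ as a corollary: by the algebro-geometric fact quoted in the introduction, once the two isomorphisms are established, seeing that $R(\mc{O}_8)$ and $R(\overline{\mc{O}_8})$ are non-isomorphic as $G$-modules immediately forces $\overline{\mc{O}_8}$ to be non-normal.

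On the algebraic geometry side, I would compute $R(\overline{\mc{O}_8})$ from the defining ideal of $\overline{\mc{O}_8}$ inside $S(\mf{g}^*)$: its invariant part is cut out by the two Chevalley polynomials for $G_2$, and the remaining generators can be read off from the Cayley-algebra / $7$-dimensional realisation of $\mf{g}$, so the quotient ring can be decomposed by $G$-type via an explicit Hilbert-series calculation. For $R(\mc{O}_8)$, the boundary $\overline{\mc{O}_8} \setminus \mc{O}_8$ has codimension $\geq 2$ in the normalisation $\widetilde{\overline{\mc{O}_8}}$, so $R(\mc{O}_8) = R(\widetilde{\overline{\mc{O}_8}})$; the latter can be computed from a $G$-equivariant resolution of $\overline{\mc{O}_8}$ associated to a suitable parabolic $P \leq G_2$, using Borel--Weil--Bott on the fibres together with Grauert--Riemenschneider vanishing to reduce the answer to a sum of $G$-modules induced from $P$-characters.

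On the representation-theoretic side, the key input is Joseph's result that at both infinitesimal characters $\tfrac{1}{2}(\omega_1+\omega_2)$ and $\tfrac{1}{2}(5\omega_1-\omega_2)$ the maximal primitive ideal $J(\lambda)$ is completely prime with associated variety $\overline{\mc{O}_8}$. The $G \times G$-module structure of each quotient $U(\mf{g})/J(\lambda)$ can then be extracted via Vogan's asymptotic $K$-type theorem --- which identifies the leading term of the character with a module supported on $\overline{\mc{O}_8}$ --- together with translation functors moving between the two infinitesimal characters and the Kazhdan--Lusztig combinatorics of the relevant cell in the Weyl group of $G_2$.

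The main obstacle will be matching multiplicities type-by-type: the two rings $R(\mc{O}_8)$ and $R(\overline{\mc{O}_8})$ should differ on a small, explicit family of $G$-types coming from the non-normal locus, and the hard bookkeeping is to verify that the primitive ideal at $\tfrac{1}{2}(\omega_1+\omega_2)$ picks up precisely the extra $G$-types corresponding to normalisation while the one at $\tfrac{1}{2}(5\omega_1-\omega_2)$ does not. For $G_2$ the weight combinatorics should be tractable enough that this comparison can be performed on a finite list of low-rank fundamental representations, after which the full isomorphism statement and the consequent non-normality of $\overline{\mc{O}_8}$ follow at once.
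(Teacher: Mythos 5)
Your proposal takes a genuinely different (and considerably heavier) route than the paper, and both its geometric and representation-theoretic halves as stated contain gaps.

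On the geometric side you propose to compute $R(\overline{\mc{O}_8})$ directly from its defining ideal and $R(\mc{O}_8)$ from a resolution of the normalization. The first step is much harder than you suggest: the two Chevalley invariants cut out the nilpotent cone $\overline{\mc{O}_{12}}$, not $\overline{\mc{O}_8}$ (which is codimension $4$ in $\mc{N}$), and the extra generators of the defining ideal of $\overline{\mc{O}_8}$ are not something one "reads off" from the $7$-dimensional realization without serious work; this is essentially what Kraft and Levasseur--Smith had to grind through. The paper sidesteps this entirely by importing Costantini's Theorem 5.6, which gives the \emph{difference} $R(\mc{O}_8)-R(\overline{\mc{O}_8})=\bigoplus_n V_{(1,n)}$ in closed form, and Levasseur--Smith's identification $U(\mf{g})/J(\tfrac12(\omega_1+\omega_2))|_{K_{\bb{C}}}\cong R(\mc{O}_8)$. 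Once those are in hand, no independent computation of either coordinate ring is needed.

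On the representation-theoretic side, invoking Vogan's asymptotic $K$-type theorem only pins down the leading term of the $K$-type distribution, not the full module, and "translation functors plus KL combinatorics" is precisely the package that Barbasch--Vogan already assembled into a closed character formula $U(\mf{g})/J(\lambda)=\sum_{w\in W_\lambda}\det(w)\,X(\lambda,w\lambda)$ for integral regular $\lambda$. The paper simply applies this formula at $\lambda_1=\tfrac12(\omega_1+\omega_2)$ and $\lambda_2=\tfrac12(5\omega_1-\omega_2)$, restricts to $K_{\bb{C}}$ to get explicit alternating sums of $\operatorname{Ind}_T^G$'s, rewrites $\bigoplus_n V_{(1,n)}$ in the same language via the Weyl character formula (Lemma~2.2), and then the identity is a four-line cancellation. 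So your outline of the rep-theory side is not wrong in spirit, but it re-derives Barbasch--Vogan rather than citing it, and as written it does not close the gap between "leading term" and "full $K$-type character." The net effect: your plan would, with a great deal of work, reprove all the inputs that the paper treats as black boxes; the paper's contribution is to notice that the Levasseur--Smith and Costantini results combine with the known character formula to make the verification nearly formal.
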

Interestingly, by the classification of spherical unitary dual of complex $G_2$ given by Duflo in \cite{Du}, $U(\mathfrak{g})/J(\frac{1}{2}(\omega_1 + \omega_2))$ is unitarizable while $U(\mathfrak{g})/J(\frac{1}{2}(5\omega_1 - \omega_2))$ is not (this fact is also observed by Vogan in p.226 of \cite{V2}). Later, Levasseur and Smith in \cite{LS} proved that $U(\mathfrak{g})/J(\frac{1}{2}(\omega_1 + \omega_2))|_{K_{\mathbb{C}}} \cong R(\mathcal{O}_8)$ and $\overline{\mathcal{O}_8}$ is not normal, but were unable to prove the rest of the conjecture. The main result of this manuscript is the following:
\begin{theorem} \label{thm:main}
As $K_{\bb{C}} \cong G$ modules,
$$U(\mf{g})/J(\frac{1}{2}(5\omega_1 - \omega_2))|_{K_{\bb{C}}} \cong R(\overline{\mc{O}_8}).$$
\end{theorem}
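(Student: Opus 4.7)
The plan is to compute the two sides of the isomorphism as $G$-modules by separate methods and then match the resulting multiplicity formulas, using the Levasseur--Smith theorem for $\mc{O}_8$ as the essential input. Observe first that since the boundary $\overline{\mc{O}_8} \setminus \mc{O}_8 = \overline{\mc{O}_6}$ has codimension $\geq 2$ in $\overline{\mc{O}_8}$ and $\mc{O}_8$ has trivial component group, algebraic Hartogs gives $R(\widetilde{\overline{\mc{O}_8}}) = R(\mc{O}_8)$, where $\widetilde{\overline{\mc{O}_8}}$ is the normalization. Hence the Levasseur--Smith decomposition already supplies an explicit $G$-module structure on the ambient space $R(\mc{O}_8)$, inside which $R(\overline{\mc{O}_8})$ sits as a proper graded $G$-stable subring --- namely, the subring generated by $\mf{g}^* = R(\overline{\mc{O}_8})_1$.

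For the left-hand side, I would compute the $G$-decomposition of $U(\mf{g})/J(\tfrac{1}{2}(5\omega_1-\omega_2))$ using Duflo's classification of completely prime primitive ideals of $U(\mf{g})$ together with the Barbasch--Vogan character formula for $K_{\bb{C}}$-types of irreducible Harish-Chandra bimodules over a complex group at half-integral infinitesimal character. Concretely, one realizes this quotient as a specific composition factor of an induced module and reads off the multiplicities from the associated Kazhdan--Lusztig data, yielding an explicit formula $[U(\mf{g})/J(\tfrac{1}{2}(5\omega_1-\omega_2)) : V(m\omega_1 + n\omega_2)] = n_{m,n}$. For the right-hand side, I would compute $R(\overline{\mc{O}_8})$ directly as the union over $d$ of the images of the multiplication maps $S^d(\mf{g}^*) \to R(\mc{O}_8)_d$. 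Since Levasseur--Smith describes the target and classical branching describes the source, this reduces in each degree to a finite $G$-equivariant linear-algebra computation, feasible in the low-rank $G_2$ setting; the Hilbert series of both sides provides a useful global check.

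The main obstacle is the identification of the \emph{defect} $G$-isotypic components: the pieces of $R(\mc{O}_8)$ not reached by multiplication of lower-degree elements, which precisely witness the non-normality of $\overline{\mc{O}_8}$. These defect types must be shown to coincide with the $G$-types appearing in $U(\mf{g})/J(\tfrac{1}{2}(\omega_1+\omega_2))$ but absent from $U(\mf{g})/J(\tfrac{1}{2}(5\omega_1-\omega_2))$. Once this bookkeeping is carried out and the two sets match in every degree, the theorem follows and one recovers Vogan's conjectural quantization model for $\overline{\mc{O}_8}$.
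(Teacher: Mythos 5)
Your approach to the left-hand side (computing the $K_{\bb{C}}$-types of $U(\mf{g})/J(\tfrac{1}{2}(5\omega_1-\omega_2))$ via the Barbasch--Vogan character formula) and your use of Levasseur--Smith for $R(\mc{O}_8)$ both match the paper. The genuine gap is in your proposed computation of $R(\overline{\mc{O}_8})$ as the subring of $R(\mc{O}_8)$ generated by $\mf{g}^*$. You claim that, since Levasseur--Smith gives the $G$-module decomposition of $R(\mc{O}_8)$ and classical branching gives that of $S^d(\mf{g}^*)$, determining the image of the multiplication map $S^d(\mf{g}^*) \to R(\mc{O}_8)_d$ is ``a finite $G$-equivariant linear-algebra computation.'' It is not: knowing the isotypic decompositions of the source and target of a $G$-equivariant map does not determine its image. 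When a type $V_\mu$ occurs in both, the induced map on isotypic components may or may not vanish, and deciding which requires genuine information about the \emph{ring} structure of $R(\mc{O}_8)$ (i.e., explicit realizations of the highest weight vectors and their products), which the character-level data you invoke does not provide. Moreover, even granting that each degree could be handled, there is no stated bound on $d$, so your scheme does not visibly terminate as a proof; the ``Hilbert series check'' is a sanity check, not a closure of the argument.

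The paper circumvents exactly this difficulty by invoking Costantini's theorem, which supplies the precise $G$-module discrepancy $R(\mc{O}_8) \cong R(\overline{\mc{O}_8}) \oplus \bigoplus_{n \geq 0} V_{(1,n)}$ as an external input. The only work remaining is then a purely combinatorial verification: rewrite $\bigoplus_n V_{(1,n)}$ as a finite alternating sum of $\mathrm{Ind}_T^G$ terms via the Weyl character formula (the paper's Lemma 2.2), compute both $U(\mf{g})/J(\lambda_i)|_{K_{\bb{C}}}$ by Barbasch--Vogan (Proposition 2.3), and check the resulting identity. To repair your argument you would either need to reproduce Costantini's computation of the discrepancy module, or establish some effective generation/degree bound together with enough explicit control of the multiplication in $R(\mc{O}_8)$ to carry out the image computation; neither is sketched in the proposal.
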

\begin{remark}
This quantization model of nilpotent orbit closure is very different from the classical model given in \cite{Br 2003}. Namely, the Brylinski model is not necessarily of the form $U(\mf{g})/J(\lambda)$. In particular, when the classical nilpotent orbit closure $\overline{\mc{O}}$ is not normal (the classification of all such orbit closures is given in \cite{KP 1982}), the infinitesimal character of the Brylinski model $\lambda_{\mc{O}}$ always yields associated variety $AV(U(\mf{g})/J(\lambda_{\mc{O}})) = \overline{\mc{O}'}$, where $\mc{O}'$ is strictly smaller than $\mc{O}$.\\
\indent In fact, it can be shown that the Brylinski model always contains the composition factor $U(\mf{g})/$ $J(\lambda_{\mc{O}})$. This is part of the on-going work of Barbasch and the author \cite{BW}.
\end{remark}
Before going to the proof of Theorem \ref{thm:main}, it is worthwhile to mention the orbits other than $\mc{O}_8$ in $\mf{g}$. Indeed, Kraft in \cite{K} confirmed that $\mc{O}_8$ is the only nilpotent orbit with non-normal closure. So we just need to consider quantizations of the orbits (and their covers) only. For the zero orbit $\mc{O}_0$ the quantization is trivial, and the quantization of the minimal orbit $\mc{O}_{6}$ is $U(\mf{g})/J(\frac{1}{3}(3\omega_1 + \omega_2))$, where $J(\frac{1}{3}(3\omega_1 + \omega_2))$ is the Joseph ideal. The 10-dimensional orbit $\mc{O}_{10}$ is a special orbit with fundamental group $S_3$. It is a simple exercise to compare the formulas in \cite{BV 1985} and \cite{MG1} that the spherical unipotent representation attached to $\mc{O}_{10}$ is a quantization of $\mc{O}$ (as a bonus, the other two unipotent representations attached to $\mc{O}_{10}$ essentially gives quantization of all covers of $\mc{O}_{10}$ as well). Finally, the quantization of the principal orbit $\mc{O}_{12}$ is well known to be the principal series representation with zero infinitesimal character. In conclusion, we completed the picture of quantization for all nilpotent orbits of $\mf{g}$ and their closures.

\section{Proof of the Theorem}
As mentioned in the Introduction, the non-normality of $\overline{\mc{O}_8}$ implies that $R(\overline{\mc{O}_8}) \subsetneq R(\mc{O}_8)$. In fact, Costantini in \cite{Cos} gives the discrepancies in terms of $G$-modules:
\begin{theorem}[\cite{Cos}, Theorem 5.6] \label{thm:discrepancy}
Let $V_{(a,b)}$ be the finite-dimensional irreducible representation of $G_2$ with highest weight $a\omega_1 + b\omega_2$, where $a$ and $b$ are non-negative integers. Then
$$R(\mc{O}_8) \cong R(\overline{\mc{O}_8}) \oplus \bigoplus_{n \in \bb{N} \cup \{0\}} V_{(1,n)}.$$
\end{theorem}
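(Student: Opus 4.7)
The plan is to compute $R(\mc{O}_8)$ and $R(\overline{\mc{O}_8})$ separately as $G$-modules and subtract. The crucial geometric observation is that the boundary $\overline{\mc{O}_8} \setminus \mc{O}_8 = \overline{\mc{O}_6}$ has codimension $2$ in $\overline{\mc{O}_8}$; consequently, on the finite $G$-equivariant normalization $\eta: \widetilde{\overline{\mc{O}_8}} \to \overline{\mc{O}_8}$ the preimage of $\overline{\mc{O}_6}$ still has codimension $\geq 2$. Hartogs' theorem on the normal variety $\widetilde{\overline{\mc{O}_8}}$ then yields $R(\widetilde{\overline{\mc{O}_8}}) = R(\mc{O}_8)$, so that the short exact sequence
$$0 \longrightarrow R(\overline{\mc{O}_8}) \longrightarrow R(\mc{O}_8) \longrightarrow Q \longrightarrow 0$$
identifies the sought discrepancy with a graded $G$-module $Q$ supported scheme-theoretically along $\overline{\mc{O}_6}$.

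For $R(\mc{O}_8)$, I would apply Frobenius reciprocity: since $\mc{O}_8 \cong G/Z_G(e)$ for a representative $e \in \mc{O}_8$, one has $[R(\mc{O}_8) : V_{(a,b)}] = \dim V_{(a,b)}^{Z_G(e)}$. The reductive part of $Z_G(e)$ for the $\tilde{A}_1$-orbit of $G_2$ is small and well-known, so that its invariants in the finite-dimensional representations $V_{(a,b)}$ can be enumerated by weight-multiplicity bookkeeping. As a cross-check, this computation has already been carried out by Levasseur--Smith and can equivalently be recovered via a Jacobson--Morozov-based equivariant partial resolution combined with a Broer-type cohomology vanishing.

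For $R(\overline{\mc{O}_8})$ and the identification of $Q$, the most intrinsic route is via the transverse Slodowy slice $\mc{S}$ to $\mc{O}_6$ inside $\overline{\mc{O}_8}$: by the codimension-$2$ observation above, all the non-normality of $\overline{\mc{O}_8}$ is concentrated in this two-dimensional singularity. The finite-dimensional equivariant cokernel $R(\widetilde{\mc{S}})/R(\mc{S})$ is a module for the reductive part $L$ of $Z_G(e_6)$, with $e_6 \in \mc{O}_6$, and $Q$ is reconstructed as the $G$-module obtained by globalizing this $L$-equivariant datum from the fiber over $e_6$ along the $G$-orbit $\mc{O}_6$ and then inducing up to $G$. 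The main obstacle is making this slice analysis concrete: one must identify $\mc{S}$ explicitly (using a Jacobson--Morozov triple through $e_6$ and the decomposition of $\mathfrak{g}$ into $\mathfrak{sl}_2$-modules) and read off the $L$-weights of the one-dimensional generator of $R(\widetilde{\mc{S}})/R(\mc{S})$. That the resulting $G$-module is a one-parameter tower is consistent with $\mc{S}$ being governed by a single extra generator; the explicit match with $\bigoplus_n V_{(1,n)}$ then comes from tracking the $L$-weight of this generator together with the $\mathbb{C}^*$-grading coming from the Kazhdan scaling on $\mc{S}$, and can finally be cross-checked against the graded Hilbert series of $R(\mc{O}_8)$ and $R(\overline{\mc{O}_8})$ in low degrees.
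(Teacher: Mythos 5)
This statement is not proved in the paper at all: it is imported verbatim from Costantini \cite{Cos}, whose argument exploits the fact that $\mc{O}_8$ is a spherical orbit, so that $R(\mc{O}_8)$ and $R(\overline{\mc{O}_8})$ are multiplicity-free and the problem reduces to pinning down two weight monoids. Your geometric skeleton is sound as far as it goes: the boundary $\overline{\mc{O}_8}\setminus\mc{O}_8=\overline{\mc{O}_6}$ does have codimension $2$, so $R(\widetilde{\overline{\mc{O}_8}})\cong R(\mc{O}_8)$ by normality plus Hartogs, and the discrepancy is $Q=\Gamma(\overline{\mc{O}_8},\mc{C})$ for the cokernel sheaf $\mc{C}$ of $\mc{O}_{\overline{\mc{O}_8}}\hookrightarrow\eta_*\mc{O}_{\widetilde{\overline{\mc{O}_8}}}$, which is supported on $\overline{\mc{O}_6}$. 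One smaller inaccuracy first: the multiplicity of $V_{(a,b)}$ in $R(\mc{O}_8)$ is $\dim V_{(a,b)}^{Z_G(e)}$ for the \emph{full} isotropy group, and invariants under the reductive part of $Z_G(e)$ alone overcount because the unipotent radical cannot be ignored; this is repairable by simply quoting Levasseur--Smith, as you indicate.

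The genuine gap is the ``globalization'' step, which is where the actual content of the theorem lives and which your sketch replaces with a plausibility argument. Knowing the transverse slice $\mc{S}$ at $e_6$ and the $L$-module $R(\widetilde{\mc{S}})/R(\mc{S})$ determines, at best, the restriction $\mc{C}|_{\mc{O}_6}$ as a $G$-homogeneous sheaf on the \emph{open} orbit of the support, and even that requires an argument that normalization commutes with slicing and a careful match of gradings (the Kazhdan $\bb{C}^*$-action on $\mc{S}$ is not the restriction of the ambient dilation). You would then still need (i) to compute $\Gamma(\mc{O}_6,\mc{C}|_{\mc{O}_6})$, i.e.\ the sections of an induced bundle $G\times_{Z_G(e_6)}W$ over the minimal orbit --- a nontrivial McGovern-type computation that is nowhere performed --- and (ii) to control the difference between sections over $\overline{\mc{O}_6}$ and over $\mc{O}_6$: the sheaf $\mc{C}$ has a stalk at the origin, it may have torsion there, and not every section over $\mc{O}_6$ need extend, so $Q$ is not formally identified with the induced module. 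Neither step is addressed, the precise answer $\bigoplus_n V_{(1,n)}$ is never actually derived (it is only asserted to be ``consistent'' with there being one extra generator), and a cross-check of Hilbert series ``in low degrees'' cannot certify an identity of infinitely many graded pieces. As written, the proposal is a reasonable research plan in the spirit of Kraft and Kraft--Procesi, but not a proof.
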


The following Lemma gives another expression of the discrepancies between $R(\mc{O})$ and $R(\overline{\mc{O}})$:
\begin{lemma} \label{lem:cos}
As virtual $G$-modules,
$$\bigoplus_{n \in \bb{N} \cup \{0\}} V_{(1,n)} = Ind_T^G(1,0) - Ind_T^G(0,1) - Ind_T^G(2,0) + Ind_T^G(1,1) + Ind_T^G(0,2) - Ind_T^G(2,1),$$
where $Ind_T^G(a,b)$ is the shorthand for the induced module $Ind_T^G(e^{a \omega_1 + b\omega_2})$.
\end{lemma}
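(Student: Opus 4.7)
My plan is to reduce the lemma to a statement about weight multiplicities in the irreducibles $V_{(c,d)}$ and then verify it. Since every $G_2$-irreducible is self-dual ($w_0=-\mathrm{id}$), Frobenius reciprocity applied to the algebraic induction $Ind_T^G\,\bb{C}_\mu\cong H^0(G/T,\mc{L}_\mu)$ yields the decomposition
$$Ind_T^G(a,b)\;\cong\;\bigoplus_{\lambda}\dim(V_\lambda)_{a\omega_1+b\omega_2}\cdot V_\lambda.$$
Extracting the $V_{(c,d)}$-multiplicity from each side of the claimed identity, the lemma becomes the scalar claim
$$\delta_{c,1}\;=\;m_{(c,d)}(\omega_1)-m_{(c,d)}(\omega_2)-m_{(c,d)}(2\omega_1)+m_{(c,d)}(\omega_1+\omega_2)+m_{(c,d)}(2\omega_2)-m_{(c,d)}(2\omega_1+\omega_2)$$
for every dominant integral $(c,d)$, where $m_\lambda(\mu):=\dim(V_\lambda)_\mu$.

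My preferred attack is to verify this in a single stroke at the level of characters on $T$. Writing $A_\nu:=\sum_{w\in W}\mathrm{sgn}(w)e^{w\nu}$ and summing the geometric series in $n$, the character of the left-hand side is
$$\sum_{n\ge 0}\chi(V_{(1,n)})\;=\;\frac{1}{A_\rho}\sum_{w\in W}\mathrm{sgn}(w)\,\frac{e^{w(2\omega_1+\omega_2)}}{1-e^{w\omega_2}},$$
while the right-hand side character $\sum_\mu\epsilon_\mu\chi(Ind_T^G(\mu))$, by the formula $\chi(Ind_T^G(\mu))=\sum_\lambda m_\lambda(\mu)\chi(V_\lambda)$, has $\chi(V_\lambda)$-coefficient equal exactly to the signed multiplicity sum $\sum_\mu\epsilon_\mu m_\lambda(\mu)$. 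Equating the two formal series and invoking linear independence of the irreducible characters recovers the scalar identity.

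The principal obstacle is the character identity itself. My strategy exploits $s_\alpha\omega_2=\omega_2$: the twelve Weyl terms split into six pairs $\{w,ws_\alpha\}$ sharing the denominator $1-e^{w\omega_2}$, and the paired numerator $e^{w(2\omega_1+\omega_2)}-e^{w(-2\omega_1+3\omega_2)}$ factors as $e^{w(2\omega_1+\omega_2)}(1-e^{-w\alpha})(1+e^{-w\alpha})$ via the relation $\alpha=2\omega_1-\omega_2$. Since $\omega_2=3\alpha+2\beta$ is a long root, the denominator $1-e^{w\omega_2}$ appears (up to a $\pm e^{w\omega_2/2}$ normalization) among the six factors of the Weyl denominator $A_\rho=\prod_{\eta>0}(e^{\eta/2}-e^{-\eta/2})$, so after multiplying through by $A_\rho$ one cancellation is automatic, and the remaining Laurent monomials can be matched against $\sum_\mu\epsilon_\mu S_\mu$ (with $S_\mu:=\sum_{w\in W}e^{w\mu}$) by a finite bookkeeping that uses the $G_2$ braid relation $(s_\alpha s_\beta)^6=1$. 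As a more hands-on backup that sidesteps the global identity, one can verify the scalar claim directly by Freudenthal's recursion: for each of the six weights $\mu$ in question, $m_{(c,d)}(\mu)$ admits a piecewise polynomial formula in $(c,d)$ with pieces indexed by walls of the Weyl chamber, reducing the signed-sum identity to a finite collection of arithmetic checks.
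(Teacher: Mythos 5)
Your reduction to a scalar weight-multiplicity identity is clean and correct: since $G_2$-representations are self-dual, Frobenius reciprocity indeed gives $Ind_T^G(\mu)\cong\bigoplus_\lambda\dim(V_\lambda)_\mu\cdot V_\lambda$, and extracting the $V_{(c,d)}$-coefficient from both sides reduces the lemma to
\begin{equation*}
\delta_{c,1}=m_{(c,d)}(\omega_1)-m_{(c,d)}(\omega_2)-m_{(c,d)}(2\omega_1)+m_{(c,d)}(\omega_1+\omega_2)+m_{(c,d)}(2\omega_2)-m_{(c,d)}(2\omega_1+\omega_2)
\end{equation*}
for all dominant $(c,d)$. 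This inverts the direction of the paper's argument: the paper writes each $V_{(1,n)}$ as a signed sum $\sum_{w\in W}\det(w)\,Ind_T^G(\lambda_w)$ via the Weyl character formula, writes out the twelve terms explicitly (separately for $n=0$, $n=1$, $n>1$), and sums over $n$ to obtain a massive telescoping; you instead express each $Ind_T^G(\mu)$ in terms of irreducibles and reduce to a pointwise identity about weight multiplicities.

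The gap is that you never actually prove the scalar identity. Your generating-function manoeuvre is not well-formed: the "character" $\sum_\lambda m_\lambda(\mu)\chi(V_\lambda)$ of $Ind_T^G(\mu)$ is an \emph{infinite} formal series, not the finite orbit sum $S_\mu=\sum_{w\in W}e^{w\mu}$, so the proposed matching of "remaining Laurent monomials" against $\sum_\mu\epsilon_\mu S_\mu$ is comparing objects of different kinds; after clearing $A_\rho$ you still have the geometric denominators $1-e^{w\omega_2}$ to contend with, and the sketch of how the factorization $e^{w(2\omega_1+\omega_2)}(1-e^{-w\alpha})(1+e^{-w\alpha})$ interacts with the Weyl denominator is left entirely to "finite bookkeeping." The Freudenthal fallback is likewise only a plan; no piecewise-polynomial formulas for $m_{(c,d)}(\mu)$ at the six relevant weights are produced, and no list of cases is checked. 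Compare with the paper's proof, which — while computational — actually exhibits the twelve-term expansion of $V_{(1,n)}$ for every $n$ and carries the telescoping to completion. To salvage your route you would need to either (i) carry out the Freudenthal computation to the end (a finite but nontrivial case analysis in $(c,d)$), or (ii) replace the mis-stated generating-function identity with a correct one, e.g.\ by comparing $A_\rho\sum_{n\ge0}\chi(V_{(1,n)})$ directly with $\sum_\mu\epsilon_\mu A_{\mu+\rho}\cdot\frac{1}{\prod_{\alpha>0}(1-e^{-\alpha})}$ type expressions via Kostant's multiplicity formula, which requires a genuine argument you have not given.
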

\begin{proof}
The Lemma can be derived from the Weyl character formula. Namely, by the $W(G_2)$-symmetry of weights of $V_{(a,b)}$, we have
$$V_{(a,b)} = \sum_{w \in W(G_2)} \det(w) Ind_T^G(\lambda_w)$$
with $\lambda_w$ being the unique $W(G_2)$-conjugate of $w[(a,b)+(1,1)] - (1,1)$ lying in the dominant chamber. In fact, we have
\begin{align*}
V_{(1,n)} =  &Ind_T^G(1,n) - Ind_T^G(1,n+3) - Ind_T^G(2,n) + Ind_T^G(2,n+2)\\
& - Ind_T^G(3,n-1) + Ind_T^G(3,n) - Ind_T^G(3,n+1) + Ind_T^G(3,n+2) \\
& + Ind_T^G(6,n-2) - Ind_T^G(6,n) - Ind_T^G(7,n-2) + Ind_T^G(7,n-1)
\end{align*}
for $n > 1$, and
\begin{align*}
V_{(1,0)} &= Ind_T^G(1,0) - Ind_T^G(1,3) - Ind_T^G(2,0) + Ind_T^G(2,2) - Ind_T^G(0,1) + Ind_T^G(3,0)\\
&- Ind_T^G(3,1) + Ind_T^G(3,2) + Ind_T^G(0,2) - Ind_T^G(6,0) - Ind_T^G(1,2) + Ind_T^G(4,1);
\end{align*}
\begin{align*}
V_{(1,1)} &= Ind_T^G(1,1) - Ind_T^G(1,4) - Ind_T^G(2,1) + Ind_T^G(2,3) - Ind_T^G(3,0) + Ind_T^G(3,1) \\
&- Ind_T^G(3,2) + Ind_T^G(3,3) + Ind_T^G(3,1) - Ind_T^G(6,1) - Ind_T^G(4,1) + Ind_T^G(7,0).
\end{align*}
The Lemma is proved by adding up the terms.
\end{proof}
We now study the two Harish-Chandra bi-modules $U(\mf{g})/J(\frac{1}{2}(\omega_1 + \omega_2))$ and $U(\mf{g})/J(\frac{1}{2}(5\omega_1 - \omega_2))$:
\begin{proposition} \label{prop:uni}
As $K_{\bb{C}} \cong G$-modules
$$U(\mf{g})/J(\frac{1}{2}(\omega_1 + \omega_2))|_{K_{\bb{C}}} = Ind_T^G(0,0) - Ind_T^G(0,1) - Ind_T^G(2,0) + Ind_T^G(1,1);$$
$$U(\mf{g})/J(\frac{1}{2}(5\omega_1 - \omega_2))|_{K_{\bb{C}}} = Ind_T^G(0,0) - Ind_T^G(1,0) - Ind_T^G(0,2) + Ind_T^G(2,1).$$
\end{proposition}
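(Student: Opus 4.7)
The strategy is to apply the Kazhdan--Lusztig character formula for irreducible Harish-Chandra bimodules of the complex group $G$. By Duflo's theory, $U(\mf{g})/J(\lambda)$ is isomorphic as a $(\mf{g}\times\mf{g},G)$-module to the irreducible spherical Harish-Chandra module $L(\lambda)$ at the appropriate infinitesimal character; in the Grothendieck group this decomposes as an alternating sum of spherical principal series indexed by the integral Weyl group $W_\lambda$, with coefficients given by KL polynomials at $q=1$; and by Frobenius reciprocity each principal series restricts to $K_{\bb{C}}\cong G$ as a single induced module $Ind_T^G(w\lambda+\lambda)$ (up to conjugation into the dominant chamber).

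The first step is to determine $W_\lambda$ for each of the two parameters. Using the normalization $|\alpha|^2=2$ for $G_2$, one pairs each positive coroot with $\lambda$ and retains those whose pairing lies in $\bb{Z}$. A direct calculation shows that for both $\lambda_1=\frac{1}{2}(\omega_1+\omega_2)$ and $\lambda_2=\frac{1}{2}(5\omega_1-\omega_2)$, the only positive coroots pairing integrally with $\lambda$ are the short $(\alpha+\beta)^\vee$ and the long $(3\alpha+\beta)^\vee$. These two coroots are mutually orthogonal, so $W_\lambda\cong A_1\times A_1$ (of order $4$) in both cases, and $\lambda$ is regular dominant for $W_\lambda$.

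Because $W_\lambda$ is of type $A_1\times A_1$, all KL polynomials equal $1$, so the character formula collapses to
$$U(\mf{g})/J(\lambda)\big|_{K_{\bb{C}}}\;=\;\sum_{w\in W_\lambda}(-1)^{\ell(w)}\,Ind_T^G(w\lambda+\lambda).$$
The longest element $s_{\alpha+\beta}s_{3\alpha+\beta}$ acts as $-1$ on $\mf{h}^*$ (since $\alpha+\beta$ and $3\alpha+\beta$ span $\mf{h}^*$), so it contributes $+Ind_T^G(0,0)$; the identity contributes $+Ind_T^G(2\lambda)$; and each of the two simple reflections contributes with a minus sign. Computing the pairings $\langle(\alpha+\beta)^\vee,\lambda\rangle$ and $\langle(3\alpha+\beta)^\vee,\lambda\rangle$ and reducing the four resulting weights to the dominant chamber under $W(G_2)$ (permissible since $Ind_T^G(\mu)$ depends only on the $W(G_2)$-orbit of $\mu$) yields dominant weights $(0,0),(0,1),(2,0),(1,1)$ for $\lambda_1$ and $(0,0),(1,0),(0,2),(2,1)$ for $\lambda_2$, matching the two formulas in the proposition.

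The main obstacle is securing the Kazhdan--Lusztig input rather than the combinatorial verification: one must identify $U(\mf{g})/J(\lambda)$ with the irreducible spherical Harish-Chandra bimodule for these specific non-integral parameters (via Duflo's classification of primitive ideals for complex groups), and confirm that the integral Weyl group of type $A_1\times A_1$ really yields the alternating sum above with no extra composition factors. Once this input is in place, the rest is a direct weight-lattice calculation in $G_2$.
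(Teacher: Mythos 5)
Your approach is essentially the one the paper uses: apply the Barbasch--Vogan character formula to write $U(\mf{g})/J(\lambda)$ as an alternating sum of spherical principal series over $W_\lambda \cong W(A_1 \times \tilde A_1)$ (generated by $s_{\alpha+\beta}$ and $s_{3\alpha+\beta}$), restrict each term to $K_{\bb{C}}$ via Frobenius reciprocity, and conjugate the resulting weights into the dominant chamber using $W(G_2)$-invariance of $Ind_T^G$. One small slip worth noting: the $K_{\bb{C}}$-restriction of $X(\lambda, w\lambda)$ is $Ind_T^G(\lambda - w\lambda)$, not $Ind_T^G(w\lambda + \lambda)$; your formula nevertheless produces the correct total because $-1 \in W_\lambda$ with $\det(-1)=+1$, so the substitution $w \mapsto -w$ in the alternating sum turns your expression into the correct one, only permuting which group element is credited with which dominant weight.
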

\begin{proof}
To cater for subsequent calculations, we let $\mf{h}^* = \{ (x,y,z) \in \bb{C}^3 | x+y+z = 0\}$, with short simple root $\alpha = (1,-1,0)$ and long simple root $\beta = (-1,2,-1)$. Then
$$\lambda_1 = \frac{1}{2}(\omega_1 + \omega_2) = (1,1/2,-3/2);\ \ \lambda_2 = \frac{1}{2}(5\omega_1 - \omega_2) = (2,-1/2,-3/2).$$

The character formulas of $U(\mf{g})/J(\lambda)$ for regular $\lambda$ are well known by the work of Barbasch and Vogan \cite{BV 1985}: Consider the subgroup $W_{\lambda}$ of $W(G_2)$ generated by roots $\alpha$ satisfying $2\frac{\langle \alpha, \lambda \rangle}{\langle \alpha, \alpha \rangle} \in \bb{Z}$. Then the formula is given by
$$U(\mf{g})/J(\lambda) = \sum_{w \in W_{\lambda}} \det(w) X(\lambda,w\lambda),$$
where $X(\mu,\nu) = K$-finite part of $Ind_B^G(e^{(\mu,\nu)} \otimes 1)$ is the \textbf{principal series representation} with character $(\mu,\nu) \in \mf{h}_{\bb{C}}$, the complexification of the maximal torus $\mf{h}$ in $\mf{g}$ (here we treat $G$ as a real Lie group). In particular, the $G \cong K_{\bb{C}}$-types of $X(\mu,\nu)$ is equal to $Ind_T^G(e^{\mu-\nu})$ (see Theorem 1.8 of \cite{BV 1985} for more details on the principal series representations).\\
\indent Now apply the above recipe for $\lambda_1 = (1,1/2,-3/2)$: With the above notations, $W_{\lambda_1}$ is isomorphic to $W(A_1 \times \tilde{A}_1)$, generated by the roots $\{ (0,1,-1), (2,-1,-1)\}$. Hence the character formula of $U(\mf{g})/J(\lambda_1)$ is given by
\begin{align*}
U(\mf{g})/J(\lambda_1) = & X((1,1/2,-3/2),(1,1/2,-3/2)) - X((1,1/2,-3/2),(1,-3/2,1/2))\\
&- X((1,1/2,-3/2),(-1,3/2,-1/2)) +X((1,1/2,-3/2),(-1,-1/2,3/2)).
\end{align*}
Upon restricting to $K_{\bb{C}}$, we have
$$U(\mf{g})/J(\lambda_1)|_{K_{\bb{C}}} \cong Ind_T^G(e^{(0,0,0)}) - Ind_T^G(e^{(0,2,-2)}) - Ind_T^G(e^{(2,-1,-1)}) + Ind_G^T(e^{(2,1,-3)}).$$
Again, by $W(G_2)$-symmetry of finite-dimensional irreducible $G$-modules, the above expression can be written in the form as in the Proposition. The calculations for $U(\mf{g})/J(\lambda_2)$ is identical to the one above. We omit the calculations here.
\end{proof}

\noindent \textit{Proof of Theorem \ref{thm:main}.}
By the result of Levasseur and Smith in \cite{LS},
$$U(\mf{g})/J(\frac{1}{2}(\omega_1 + \omega_2))|_{K_{\bb{C}}} \cong R(\mc{O}_8)$$
as $G$-modules. Therefore the first equation of Proposition \ref{prop:uni} gives
\begin{equation} \label{eq:RO}
R(\mc{O}_8) \cong Ind_T^G(0,0) - Ind_T^G(0,1) - Ind_T^G(2,0) + Ind_T^G(1,1)
\end{equation}
as virtual $G$-modules. By Theorem \ref{thm:discrepancy} and Lemma \ref{lem:cos}, we need to show that
\begin{align*}
U(\mf{g})/J(\frac{1}{2}(5\omega_1 - \omega_2))|_{K_{\bb{C}}} &= R(\mc{O}_8) - \bigoplus_n V_{(1,n)} \\
&= R(\mc{O}_8) - (Ind_T^G(1,0) - Ind_T^G(0,1) - Ind_T^G(2,0)\\
&\ \ \ \  + Ind_T^G(1,1) + Ind_T^G(0,2) - Ind_T^G(2,1)).
\end{align*}
This is readily seen to be true from the Equation (\ref{eq:RO}) and the second equation of Proposition \ref{prop:uni}. \qed

\section{Final Remarks}
In \cite{S2000}, Sommers gives some conjectures on the multiplicities of small representations of $R(\mc{O})$ for the exceptional groups. In particular, given that his conjecture is true, one can show the non-normality of some orbit closures.\\
\indent To describe more explicitly which orbits $\mc{O}$ are conjectured to have non-normal closures, recall that Lusztig in \cite{Lu2} partitioned all nilpotent orbits in $\mf{g}$ by \textbf{special pieces}, i.e. for all nilpotent orbit $\mc{O}'$, it must belong to exactly one of the special pieces
$$S_{\mc{O}} := \{\mc{O}' \subseteq \overline{\mc{O}} | \mc{O}' \nsubseteq \overline{\mc{O}}_{spec}\ \text{for any other special orbit } \mc{O}_{spec} \subsetneq \mc{O} \},$$
where $\mc{O}$ runs through all special orbits in $\mf{g}$.\\
\indent For each $\mc{O}' \in S_{\mc{O}}$, Lusztig assigned a Levi subgroup $H(\mc{O}',\mc{O})$ of the Lusztig's quotient $\overline{A}(\mc{O})$. For example, the largest orbit in the special piece $\mc{O} \in S_{\mc{O}}$ has $H(\mc{O},\mc{O}) = 1$, and the smallest orbit $\mc{O}'' \in S_{\mc{O}}$ has $H(\mc{O}'',\mc{O}) = \overline{A}(\mc{O})$.\\
\indent By the conjecture of Sommers, if $\mc{O}$ has non-abelian Lusztig's quotient, i.e. $\overline{A}(\mc{O}) = S_3, S_4$ or $S_5$, then all $\mc{O}' \in S_{\mc{O}}$ with $H(\mc{O}',\mc{O})$ not equal to $1$ or $\overline{A}(\mc{O})$ (that is, not equal to $\mc{O}$ or $\mc{O}''$) have non-normal closures.\\
\indent For example, in the case of $G_2$ we studied above, we have $\mc{O}_8 \in S_{\mc{O}_{10}}$ and $H(\mc{O}_8,\mc{O}_{10}) = S_2 \leq S_3 = \overline{A}(\mc{O}_{10})$. So $\overline{\mc{O}_8}$ is conjectured to have non-normal closure, which has been shown to be true.\\
\indent We would like to end our manuscript with the following:
\begin{conjecture}
Suppose $\mc{O}$ is a nilpotent orbit with $\overline{A}(\mc{O}) = S_3$, $S_4$ or $S_5$, and $\mc{O}' \in S_{\mc{O}}$ satisfies $H(\mc{O}',\mc{O}) \neq 1$, $\overline{A}(\mc{O})$. Then there exists two distinct completely prime primitive ideals $J(\lambda_1)$, $J(\lambda_2)$ such that
$$U(\mf{g})/J(\lambda_1)|_{K_{\bb{C}}} \cong R(\mc{O}),\ \ \ U(\mf{g})/J(\lambda_2)|_{K_{\bb{C}}} \cong R(\overline{\mc{O}}).$$
\end{conjecture}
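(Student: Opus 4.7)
The plan is to emulate the $G_2$ argument of this paper, case by case across the exceptional groups. By inspection of the Bala--Carter tables, the orbits $\mc{O}'$ to which the conjecture applies are few in number and appear only in $F_4$, $E_6$, $E_7$, and $E_8$; for each one, the proof breaks into four steps matching those of Theorem \ref{thm:main}. First, I would compute $R(\mc{O}')$ and the discrepancy $R(\mc{O}')/R(\overline{\mc{O}'})$ as virtual $G$-modules from Sommers' conjectured multiplicity formula in \cite{S2000}, then expand both into alternating sums of induced characters $Ind_T^G$ using the Weyl character formula exactly as in Lemma \ref{lem:cos}. This produces the target $G$-module expression for $R(\overline{\mc{O}'})$.

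Second, I would identify candidate infinitesimal characters $\lambda_1, \lambda_2$. For $\lambda_1$, the orbit method and Joseph's work on completely prime primitive ideals supply a natural candidate: the half-sum attached to $\mc{O}'$ realized in the unitary dual, which for $G_2$ gives $\frac{1}{2}(\omega_1 + \omega_2)$. For $\lambda_2$, the heuristic is that $U(\mf{g})/J(\lambda_2)$ must have associated variety $\overline{\mc{O}'}$ but strictly smaller $K_{\bb{C}}$-multiplicities, corresponding to a non-unitary quotient. I would tabulate the list of maximal primitive ideals with associated variety $\overline{\mc{O}'}$ using the Barbasch--Vogan tables in \cite{BV 1985} together with the classification of special unipotent infinitesimal characters, and pick out the unique $\lambda_2$ whose $K_{\bb{C}}$-character matches the target from step one.

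Third, for each candidate $\lambda_i$ I would write down the Barbasch--Vogan character formula
$$U(\mf{g})/J(\lambda_i) = \sum_{w \in W_{\lambda_i}} \det(w) X(\lambda_i, w\lambda_i),$$
compute the integral Weyl group $W_{\lambda_i}$, restrict each principal series to $K_{\bb{C}}$ as $Ind_T^G(e^{\lambda_i - w\lambda_i})$, and reduce to the dominant chamber by $W(G)$-symmetry exactly as in the proof of Proposition \ref{prop:uni}. The result is an alternating sum of $Ind_T^G$'s to be compared directly with the target expansion from step one; complete primality of each $J(\lambda_i)$ then follows from the matching, since the right-hand sides are the coordinate rings of irreducible varieties.

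The principal obstacle will be step two. There is no a priori formula giving $\lambda_2$, and the classification of completely prime primitive ideals at non-unipotent infinitesimal characters is still incomplete beyond $G_2$. The $G_2$ shift from $\lambda_1 = \frac{1}{2}(\omega_1+\omega_2)$ to $\lambda_2 = \frac{1}{2}(5\omega_1-\omega_2)$ does not extend in any transparent way, and I expect the correct $\lambda_2$ to be controlled by the subgroup $H(\mc{O}',\mc{O}) \leq \overline{A}(\mc{O})$ together with the subregular structure of the special piece $S_{\mc{O}}$. A conceptual description of this correspondence, rather than case-by-case verification, would be the most satisfying formulation and the genuinely difficult part of the conjecture.
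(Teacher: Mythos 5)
The statement you were given is a conjecture; the paper offers no proof of it, and records it in Section 3 as an open problem suggested by the $G_2$ case. There is therefore no proof of the paper's to compare with, and your submission is properly read as a research program rather than a proof.

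As a program it tracks the $G_2$ argument sensibly, but it has more gaps than the one you flag. You are right that locating $\lambda_2$ is the central difficulty, and the suggestion that it should be controlled by $H(\mc{O}',\mc{O})$ and the structure of the special piece is a reasonable guess. However, step one is also blocked in most cases: in $G_2$ the discrepancy $R(\mc{O}_8)/R(\overline{\mc{O}_8})$ was read off from Costantini's Theorem~\ref{thm:discrepancy}, which applies to \emph{spherical} conjugacy classes, and $\mc{O}_8$ happens to be spherical. The orbits covered by the conjecture in $F_4$, $E_6$, $E_7$, $E_8$ are generally far from spherical, so that input is unavailable; and Sommers' conjectures in \cite{S2000} concern only multiplicities of a few small representations, not the full $G$-module structure of $R(\mc{O}')$ and $R(\overline{\mc{O}'})$, so they would not by themselves let you form the target expansion --- and they are unproved, so invoking them makes your argument conditional on a second open conjecture. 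Likewise, the $G_2$ proof leans on Levasseur--Smith's prior identification $U(\mf{g})/J(\lambda_1)|_{K_{\bb{C}}} \cong R(\mc{O}_8)$; an analogue would have to be established in each new case, not assumed. Finally, your remark at the end of step three that complete primality ``follows from the matching'' needs to be routed through the standard argument: one must first establish $AV(J(\lambda_i)) = \overline{\mc{O}'}$, and only then does the surjection from $\mathrm{gr}\bigl(U(\mf{g})/J(\lambda_i)\bigr)$ onto $R(\overline{\mc{O}'})$ combine with the $K_{\bb{C}}$-character equality to force an isomorphism onto a domain; as written, your claim skips the identification of the associated variety.
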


\end{document}